\theoremstyle{plain}
\newtheorem{theorem}{Theorem}[section]
\newtheorem{theorem2}{Theorem}[section]
\newtheorem{prop}[theorem2]{Proposition}
\newtheorem{cor}[theorem]{Corollary}
\theoremstyle{definition}
\newtheorem{definition}[theorem2]{Definition}
\theoremstyle{remark}
\newtheorem*{remark}{Remark}
\numberwithin{equation}{section}
\begin{document}

\title{The canonical Einstein metric on $G_{2}$ is dynamically unstable under the Ricci flow}
\author{Stuart James Hall}
\begin{abstract}
In this note we show that the bi-invariant Einstein metric on the compact Lie group $G_{2}$ is dynamically unstable as a fixed point of the Ricci flow. This completes the stability analysis for the bi-invariant metrics on the compact, connected, simple Lie groups. Interestingly, $G_{2}$ is the only unstable exceptional group.
\end{abstract}
\maketitle
\section{Introduction} 
\label{intro}

\noindent On any compact, connected, simple Lie group $G$ there is a canonical inner product on the Lie algebra $\mathfrak{g}$ given by multiplying the Killing form by $-1$; more precisely,
$$\langle X,Y\rangle = -\mathrm{tr}(\mathrm{ad}(X)\circ\mathrm{ad}(Y)),$$  
where $X,Y\in \mathfrak{g}$ and $\mathrm{ad}:\mathfrak{g}\rightarrow\mathfrak{gl}(\mathfrak{g})$ is the adjoint representation. This induces a Riemannian metric $g$ on $G$ that is bi-invariant (both the actions of $G$ on itself on the left and the right  are isometries). Up to scale, such a bi-invariant metric is unique. The metric $g$ is an Einstein metric satisfying $Ric(g) = g/4$ and is thus considered a fixed point of the Ricci flow
$$
\frac{\partial g}{\partial t} = -2Ric(g),
$$
as the flow induces only homothetic scaling. A natural question is to consider whether a given Einstein metric is stable in the sense that any flow starting at a small perturbation returns to the Einstein metric. In this note we prove the following theorem (we refer the reader to Section \ref{sec:2} for precise definitions of stability):
\begin{theorem}\label{mainT}
	The bi-invariant Einstein metric on the compact, simple Lie group $G_{2}$ is dynamically unstable under the Ricci flow.
\end{theorem}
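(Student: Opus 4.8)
\emph{Strategy.} The plan is to prove the stronger statement of \emph{linear} instability and then invoke the general principle that $\nu$-linear instability of a positive Einstein metric forces dynamical instability under the Ricci flow (so that no diffeomorphism-and-scaling-corrected flow starting from a nearby metric returns to $g$). Writing $\lambda=1/4$ for the Einstein constant, the relevant second-variation operator on transverse-traceless (TT) symmetric $2$-tensors is the Einstein operator $\Delta_{L}-2\lambda$, where $\Delta_{L}=\nabla^{*}\nabla+2\lambda-2\mathcal{R}$ is the Lichnerowicz Laplacian and $\mathcal{R}$ is its curvature endomorphism. Thus it suffices to exhibit a single TT eigentensor $h$ of $\Delta_{L}$ whose eigenvalue $\mu$ satisfies $\mu<2\lambda=1/2$; the existence of such a negative direction of $\Delta_{L}-2\lambda$ is precisely linear instability.

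\emph{Reduction to representation theory.} To locate such a tensor I will exploit homogeneity. Realising $(G_{2},g)$ as the symmetric space $(G_{2}\times G_{2})/\Delta G_{2}$, the bundle $S^{2}T^{*}G_{2}$ is associated to the isotropy representation of $\Delta G_{2}$ on $S^{2}\mathfrak{g}_{2}$, and Peter--Weyl together with Frobenius reciprocity decompose the sections into isotypic pieces indexed by the irreducibles $V_{\gamma}$ of $G_{2}$. On each piece $\Delta_{L}$ acts through Casimir eigenvalues: the connection Laplacian contributes $\mathrm{Cas}(\gamma)-\mathrm{Cas}_{K}(W)$, where $W\subset S^{2}\mathfrak{g}_{2}$ is the isotropy type carrying the tensor, while the curvature endomorphism $\mathcal{R}$, which for a bi-invariant metric is the explicit algebraic operator built from $R(X,Y)Z=-\tfrac14[[X,Y],Z]$, acts as a scalar on each $W$. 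The trace-free part splits as $S^{2}_{0}\mathfrak{g}_{2}=V_{27}\oplus V_{77}$, and these are the admissible isotropy types. Normalising so that the Casimir of the adjoint representation equals $1$, a direct computation with the $G_{2}$ root data yields the decisive fact: the $7$-dimensional fundamental representation $V_{7}$ has Casimir eigenvalue exactly $1/2$, equal to the threshold $2\lambda$, and is the representation of smallest positive Casimir.

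\emph{The destabilising mode and the main obstacle.} The destabilising direction should therefore be sought among TT tensors whose $G_{2}\times G_{2}$-type is built from matrix coefficients of $V_{7}$ and whose isotropy type lies in $S^{2}_{0}\mathfrak{g}_{2}$; such tensors exist, since $V_{27}\subset S^{2}V_{7}\subset V_{7}\otimes V_{7}^{*}$. For this mode the bare Casimir sits exactly at the threshold, so the entire question reduces to the sign of the correction coming from $-\mathrm{Cas}_{K}(W)$ together with the curvature term $-2\mathcal{R}$. I expect this to be the main obstacle: one must compute the scalar by which $\mathcal{R}$ acts on the relevant isotropy type and carefully reconcile the Casimir normalisations of $G_{2}\times G_{2}$ and of the diagonal $\Delta G_{2}$, so as to establish the \emph{strict} inequality $\mu<1/2$ rather than mere equality. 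Once this eigenvalue is pinned down, the final step is the comparison across the exceptional family: repeating the Casimir bookkeeping for $F_{4},E_{6},E_{7},E_{8}$, whose smallest nontrivial representations carry substantially larger Casimir ratios (for instance $2/3$ for the $26$-dimensional representation of $F_{4}$), shows that no destabilising direction arises there, singling out $G_{2}$ as the unique unstable exceptional group.
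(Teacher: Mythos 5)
Your strategy is to prove \emph{strict} linear instability --- exhibit a TT eigentensor of $\Delta_{L}$ whose eigenvalue lies strictly on the unstable side of the threshold $2\lambda$ --- and then invoke the principle that linear instability implies dynamical instability (Proposition \ref{prop:2}). This cannot work, because the strict inequality you defer to ``the main obstacle'' is false. By the theorem of Cao and He \cite{CH}, on which the paper relies, the bi-invariant metric on $G_{2}$ is \emph{neutrally} linearly stable: the largest eigenvalue of the Lichnerowicz Laplacian on the space $K_{0}$ of divergence-free tensors with zero mean trace (which contains all TT tensors) is \emph{exactly} $-2\Lambda$, i.e.\ exactly at your threshold $\mu = 2\lambda$, attained by the infinitesimal solitonic deformations $\mathcal{S}(\varphi)$ built from first eigenfunctions of the Laplacian --- precisely the modes attached to the $7$-dimensional representation $V_{7}$ that you identify. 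Your own bookkeeping already shows the bare Casimir of $V_{7}$ sits exactly at the threshold; when the curvature and isotropy corrections are carried through (this is Cao--He's computation), the eigenvalue lands exactly \emph{on} the threshold, not strictly past it. So no destabilising direction exists, second-variation analysis of $\nu$ is inconclusive for $G_{2}$, and your argument can at best recover neutral linear stability, from which neither dynamical stability nor instability follows. (This is also why your closing comparison with $F_{4}$, $E_{6}$, $E_{7}$, $E_{8}$, besides being unnecessary for the theorem, would not by itself decide anything for $G_{2}$.)

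The whole point of the paper is that $G_{2}$ is this borderline case and therefore requires a \emph{third-order} argument. The paper's proof uses Kr\"oncke's criterion (Theorem \ref{prop:3}): if $\varphi$ satisfies $\Delta\varphi = -2\Lambda\varphi$ and $\int_{M}\varphi^{3}\,dV_{g_{E}} \neq 0$, then $g_{E}$ is dynamically unstable; equivalently, the neutral deformation $\mathcal{S}(\varphi)$ is non-integrable. The eigenfunction taken is the character $\chi_{1,0}$ of the $7$-dimensional representation (Proposition \ref{G2evalvec}); since powers of a character are class functions, the cubic integral reduces via the Weyl integration formula to an explicit integral over the two-dimensional maximal torus, which evaluates to $48\pi^{2}\neq 0$ up to a positive constant. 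If you wish to salvage your representation-theoretic setup, the correct target is not the spectrum of $\Delta_{L}$ but this cubic integral: your identification of $V_{7}$ as the critical representation is the right starting point, and the quantity to compute is $\int_{G_{2}}\chi_{1,0}^{3}$, not an eigenvalue.
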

There has been a good deal of work on determining the stability of Einstein metrics and it is expected that stable geometries should be quite special. By combining the above result with the work of Cao and He \cite{CH}, we now have a complete understanding of Ricci flow stability for the canonical Einstein metrics on the compact, connected, simply-connected, simple Lie groups.
\begin{cor}\label{mainC}
	If $G$ is a compact, connected, simply-connected, simple Lie group and the bi-invariant metric is dynamically stable then $G$ is one of:
	\begin{itemize}
		\item $\mathrm{SU}(2)$, $\mathrm{Spin}(n)$ for $n\geq 7$, $E_{6}$, $E_{7}$, $E_{8}$, or $F_{4}$.
	\end{itemize}  
	If the metric is unstable then $G$ is one of:
	\begin{itemize}
		\item  $\mathrm{SU}(n)$ for $n\geq 3$, $\mathrm{Sp}(n)$ for $n\geq 2$, $G_{2}$.
	\end{itemize}
\end{cor}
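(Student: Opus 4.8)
As the work of Cao–He \cite{CH} settles every compact, simply-connected simple group other than $G_{2}$, the Corollary reduces to its one new ingredient, Theorem \ref{mainT}, so I sketch a proof that the bi-invariant metric on $G_{2}$ is dynamically unstable. The plan is to read dynamical stability through Perelman's $\nu$-entropy and a dichotomy of Kr\"oncke: a positive Einstein metric is dynamically unstable exactly when it fails to be a local maximiser of $\nu$. Writing $\lambda = 1/4$ and letting $\Delta_{L}$ be the Lichnerowicz Laplacian, $\Delta_{L}$ having an eigenvalue below $2\lambda$ on transverse-traceless (TT) tensors forces strict instability, while $\Delta_{L} > 2\lambda$ on all TT-tensors gives strict stability; the borderline case $\inf \mathrm{spec}(\Delta_{L}|_{\mathrm{TT}}) = 2\lambda$ must be decided by higher-order information. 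First I would show $G_{2}$ is exactly this borderline case, then that its neutral deformations are obstructed.

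For the spectral step I would use that $(G_{2},g)$ is the symmetric space $(G_{2}\times G_{2})/\Delta G_{2}$, so that the $G_{2}\times G_{2}$-isotypic decomposition of symmetric $2$-tensors diagonalises $\Delta_{L}$. A direct computation via Peter--Weyl and the bi-invariant curvature identity $\langle R(X,Y)Z,W\rangle = -\tfrac14\langle[X,Y],[Z,W]\rangle$ gives, on the $V_{\gamma_{L}}\boxtimes V_{\gamma_{R}}$-component, the clean formula $\Delta_{L} = \tfrac12(\mathrm{Cas}(\gamma_{L})+\mathrm{Cas}(\gamma_{R}))$, with Casimirs normalised so that the adjoint has $\mathrm{Cas} = 1$. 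Because a TT-tensor takes trace-free values, its isotypic type must meet $S^{2}_{0}\mathfrak{g}_{2} = \mathbf{27}\oplus\mathbf{77}$, so the bottom of the spectrum is governed by minimising $\mathrm{Cas}(\gamma_{L})+\mathrm{Cas}(\gamma_{R})$ subject to $\mathbf{27}$ or $\mathbf{77}$ occurring in $V_{\gamma_{L}}\otimes V_{\gamma_{R}}$. The decisive arithmetic is $\mathrm{Cas}(\mathbf{7}) = \tfrac12$ together with $\mathbf{27}\subset\mathbf{7}\otimes\mathbf{7}$, which pins the minimum to exactly $1$, i.e.\ $\Delta_{L} = 2\lambda$ with no strictly negative direction. (The fundamental of $SU(n)$, $n\ge3$, has Casimir $<\tfrac12$, producing a strict instability, and the stable groups have smallest Casimir $>\tfrac12$; $G_{2}$ sits precisely on the wall, which is why it is the residual and most delicate case.)

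With strict instability excluded, I would exhibit the neutral eigentensor explicitly from the inclusion $\mathbf{27}\hookrightarrow S^{2}(\mathbf{7})$ and, crucially, verify by a multiplicity count that it is a genuine infinitesimal Einstein deformation and not a Lie derivative; concretely, that the divergence map annihilates the unique trace-free $(\mathbf{7},\mathbf{7})$-tensor, so that the TT-multiplicity is positive (this is the content of Koiso's deformation calculation for $G_{2}$). Granting a nontrivial neutral space, dynamical instability follows once these deformations are shown to be non-integrable.

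The hard part is precisely this obstruction. Representation theory already isolates the relevant invariant: the unique $G_{2}$-invariant cubic tensor on $\mathbf{7}$ is the defining associative $3$-form $\phi$, and $\phi\otimes\phi$ is then the unique (and symmetric) cubic invariant on $\mathbf{7}\boxtimes\mathbf{7}$, so the integrability obstruction of the neutral mode must equal a fixed scalar $\kappa\,\phi\otimes\phi$. The crux is to compute this constant $\kappa$ --- a cubic curvature integral evaluated on the explicit eigentensor --- and to verify $\kappa\neq0$; by the uniqueness just noted, nonvanishing cannot be read off from symmetry alone and requires the actual computation. Once $\kappa\neq0$, the metric fails to be a local maximiser of $\nu$ and Kr\"oncke's theorem yields dynamical instability, after which the Corollary follows by assembling this with \cite{CH}. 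I expect evaluating $\kappa$, and the TT-multiplicity bookkeeping, to be the two genuine obstacles, with the emergence of the $G_{2}$-structure $3$-form as the mechanism that makes the obstruction survive.
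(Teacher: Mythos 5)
Your reduction of the Corollary to Theorem \ref{mainT} plus the Cao--He classification is exactly right and matches the paper; the gap lies in your proposed proof of Theorem \ref{mainT}, which rests on a misidentification of where the neutral directions of $G_{2}$ live. You place the borderline mode among transverse-traceless tensors of isotypic type $\mathbf{7}\boxtimes\mathbf{7}$ and assert that the divergence map annihilates the trace-free $(\mathbf{7},\mathbf{7})$-tensor, attributing this to Koiso. Both claims fail. By Frobenius reciprocity the $(\mathbf{7},\mathbf{7})$-isotypic component of $\Gamma(s^{2}(TM^{\ast}))$ has multiplicity two, spanned (for $\varphi$ in the first eigenspace) by $\varphi g$ and $\Hess \varphi$; using $\div(\Hess\varphi)=d(\Delta\varphi)+\Lambda\, d\varphi$ and $\Delta\varphi=-2\Lambda\varphi$, one checks that $a\,\varphi g+b\,\Hess\varphi$ is simultaneously trace-free and divergence-free only when $na=2\Lambda b$ and $a=\Lambda b$, forcing $a=b=0$ in dimension $14$. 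So the TT-multiplicity in type $(\mathbf{7},\mathbf{7})$ is zero, and Koiso's classification in fact says the opposite of what you need: $G_{2}$ admits \emph{no} infinitesimal Einstein deformations, i.e.\ no neutral TT modes whatsoever. The genuine neutral directions --- this is Cao--He's observation, recalled in Section \ref{sec:2} --- are the \emph{conformal} ones $\mathcal{S}(\varphi)$, divergence-free but not trace-free, built from eigenfunctions with $\Delta\varphi=-2\Lambda\varphi$. This also breaks your opening dichotomy: $\nu$-linear stability is decided on all of $K_{0}$, not just on TT tensors, and for $G_{2}$ it is precisely the scalar part that sits on the wall. Consequently the obstruction you propose to compute, a cubic invariant $\kappa\,\phi\otimes\phi$ paired against a TT mode, has no object to act on, and the argument cannot be completed along these lines.

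The repair is the paper's route: for conformal neutral directions Kr\"oncke's criterion (Theorem \ref{prop:3}) reduces everything to showing $\int_{G_{2}}\varphi^{3}\,dV\neq 0$ for a first eigenfunction $\varphi$; taking $\varphi=\chi_{1,0}$, the character of the $7$-dimensional representation, the integral is evaluated by the Weyl integration formula and equals a positive multiple of $48\pi^{2}$. It is worth noting that your instinct about the associative $3$-form is morally correct, only misapplied: with unit-mass Haar measure, $\int_{G}\chi_{1,0}^{3}\,dg=\dim\left(\mathbf{7}\otimes\mathbf{7}\otimes\mathbf{7}\right)^{G_{2}}=1$, and the invariant responsible for this nonvanishing is exactly the $G_{2}$-invariant $3$-form on $\mathbf{7}$. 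But this mechanism enters through the cube of the \emph{scalar} eigenfunction, as in Kr\"oncke's criterion, not through a tensorial obstruction evaluated on a (nonexistent) TT deformation.
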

The stability of all of the groups in Corollary \ref{mainC} \textit{except} $G_{2}$ can be determined by studying the variational  stability of Perelman's $\nu$ functional; this is what was achieved by Cao and He in \cite{CH}. The reason that the stability of $G_{2}$ is not accessible by such analysis is that it admits certain neutral directions of deformation coming from conformal variations of the metric. In fact, the conformal variations are built from the eigenfunctions associated to the first non-zero eigenvalue of the ordinary Laplacian.  Recently, Kr\"oncke proved a stability criterion that can be checked for such neutral conformal directions \cite{KKCAG}. The criterion simply involves integrating the cube of the eigenfunctions and determining if the resulting integral is non-zero (see Theorem \ref{prop:3}). Kr\"oncke applied this test to the Fubini--Study metric on $\mathbb{CP}^{n}$ for $n\geq 2$ and showed these metrics are unstable. There is also the recent work of Knopf and Sesum \cite{KS} where they independently prove Kr\"oncke's criterion and apply it to $\mathbb{CP}^{n}$. Using completely different methods for computing the integrals of eigenfunctions, Murphy, Waldron and the author have generalised the $\mathbb{CP}^{n}$ result to the K\"ahler--Einstein metric on all $k$-plane Grassmannians on $\mathbb{C}^{n}$ with $n\neq 2k$ \cite{HMW}.\\   
\\
The methods in this note are similar to those in \cite{HMW} and the proof of Theorem \ref{mainT} involves putting together various classical facts from the representation theory of compact Lie groups. The characters of irreducible representations are representatives of eigenvalues and thus are class functions. Such functions can be integrated using the Weyl integration formula which transfers the integral to one over a two-dimensional maximal torus.  This integral can then be evaluated explicitly and shown to be non-zero. Hence Kr\"oncke's result shows that the metric is unstable.\\
\\
Determining which geometries are stable under the Ricci flow is useful for establishing what possible singularity models could occur in higher dimensions. It would be interesting to consider what happens to the Ricci flow on $G_{2}$. For example, as $G_{2}$ is a principal $\mathrm{SU}(3)$-bundle over the sphere $\mathbb{S}^{6}$, one possibility is that the flow shrinks the $\mathrm{SU}(3)$-fibre and approaches the (stable) round metric on $\mathbb{S}^{6}$.  It is also interesting to note that $G_{2}$ features in the lowest-dimensional, non-spherical, stable Einstein metric that is currently known.  This is the symmetric space $G_{2}/SO(4)$ which was shown to be stable by Cao and He in \cite{CH}.\\
\\
\textit{Acknowledgements:}
It is a pleasure to thank Tommy Murphy and James Waldron for many useful conversations about the stability of symmetric spaces.  I would also like to thank David Stewart for his comments on the representation theory of $G_{2}$. I thank the referees for their careful reading of the paper and providing useful suggestions for improvement.

\section{Background}\label{sec:2}
\subsection{Stability of Einstein metrics under the Ricci flow}
We will take the following definition of dynamical stability.

\begin{definition}[(Dynamical stability of Einstein metrics)]\label{Def 1}
	Let $(M^{n},g_{E})$ be a compact Einstein manifold. The metric $g_{E}$ is said to be \textit{dynamically stable} for the Ricci flow if for any $m\geq 3$ and any $C^{m}$-neighbourhood $U$ of $g_{E}$ in the space of sections $\Gamma(s^{2}(TM^{\ast}))$, there exists a $C^{m+2}$ neighbourhood of $g_{E}$, $V \subset U$, such that: 
	\begin{enumerate}
		\item for any $g_{0}\in V$, the volume normalised Ricci flow 
		$$ \dfrac{\partial g}{\partial t} =-2Ric(g)+\frac{2}{n\mathrm{Vol(g(t))}}\left(\int_{M}\textrm{scal}(g)dV_{g}\right)g,$$
		with $g(0) = g_{0}$ exists for all time,
		\item the metrics $g(t)$ converge modulo diffeomorphism to an Einstein metric in $U$.
	\end{enumerate}
\end{definition}
\begin{remark}
The notion of stability given in Definition \ref{Def 1} is sometimes referred to as \textit{ asymptotic stability} (for example in \cite{CCGG}). 	
\end{remark}	

The literature on stability is quite extensive; we refer the reader to the references for the details of the theory we will use. There is also a survey of the field in chapter 35 of \cite{CCGG}.  One key tool is the analysis of Perelman's $\nu$ functional which was introduced in \cite{Per1}. An Einstein metric is a critical point of $\nu$ and, as $\nu$ is monotonically increasing along the Ricci flow except at its critical points, the stability of the flow can be investigated by computing the second variation of $\nu$ at an Einstein metric. This procedure was first carried out by Cao, Hamilton and Ilmanen in \cite{CHI} who gave a formula for the second variation of the $\nu$ functional (the proof of the variational formula was given later in \cite{CZ}). They demonstrated that understanding the classical linear stability of a compact Einstein metric is the same as understanding the variational stability of the $\nu$ functional. For a compact Einstein manifold $(M,g)$, the space of sections of symmetric two-tensors admits the following $L^{2}$-orthogonal decomposition, 
$$\Gamma(s^{2}(TM^{\ast})) = \mathrm{Ker}(\mathrm{div})\oplus \mathrm{Im}(\mathrm{div}^{\ast}),$$
where ${\mathrm{div}:\Gamma(s^{2}(TM^{\ast}))\rightarrow \Omega^{1}(M)}$ is the divergence map. The kernel $\mathrm{Ker}(\mathrm{div})$ admits a further decomposition,
$$\mathrm{Ker}(\mathrm{div}) = K_{0}\oplus \mathbb{R}g,$$
where 
$$K_{0} = \left\{h\in \Gamma(s^{2}(TM^{\ast})) \ | \ \mathrm{div}(h)=0 \textrm{ and } \int_{M}\mathrm{tr}(h)dV_{g} = 0\right\}.$$
The Lichnerowicz Laplacian $\Delta_{L}$ preserves the subspace $K_{0}$ and the stability can be described  in terms of the spectrum of the Lichnerowicz Laplacian viewed as a map ${\Delta_{L}:K_{0}\rightarrow K_{0}}$. We will use the convention that the Lichnerowicz Laplacian has only finitely many positive eigenvalues and $a<\Delta_{L}< b$ means that the inequality is satisfied for the largest eigenvalue of $\Delta_{L}$ when restricted to the subspace $K_{0}$. The following result, which paraphrases the results contained in \cite{CHI}, could be taken to be the definition of linear stability.
\begin{prop}[(Cao--Hamilton--Ilmanen \cite{CHI})]\label{prop:1}
	Let $(M,g_{E})$ be a compact Einstein manifold satisfying $Ric(g_{E}) = \Lambda g_{E}$ with $\Lambda>0$. Then
	\begin{enumerate}
		\item $g_{E}$ is \textit{linearly stable} if $\Delta_{L}<-2\Lambda$,
		\item $g_{E}$ is \textit{linearly unstable} if $\Delta_{L}>-2\Lambda$.	
	\end{enumerate} 
\end{prop}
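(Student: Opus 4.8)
This proposition records the reduction, due to Cao--Hamilton--Ilmanen, of linear stability to a spectral condition, so the plan is to compute the second variation of Perelman's $\nu$ functional at $g_{E}$ and read off the dichotomy. I would begin by recalling the three structural facts about $\nu$: it is invariant under diffeomorphisms and under rescaling of the metric, it is monotone non-decreasing along the Ricci flow, and it has $g_{E}$ as a critical point. The monotonicity is the conceptual heart: because $\nu$ can only increase, stability is precisely the statement that $g_{E}$ is a local maximum, i.e. that the Hessian $\delta^{2}\nu|_{g_{E}}$ is non-positive, whereas a single direction on which the Hessian is strictly positive forces the flow to move away, giving instability. The entire argument then reduces to identifying $\delta^{2}\nu$ as a quadratic form on symmetric $2$-tensors and diagonalising it.

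The first step is to cut the domain of the Hessian down to $K_{0}$. Using the $L^{2}$-orthogonal splitting $\Gamma(s^{2}(TM^{\ast})) = \mathrm{Ker}(\mathrm{div})\oplus \mathrm{Im}(\mathrm{div}^{\ast})$ recorded above, the directions in $\mathrm{Im}(\mathrm{div}^{\ast})$ are, to first order, symmetrised covariant derivatives, hence Lie derivatives of $g_{E}$ along vector fields; these are infinitesimal diffeomorphisms and so lie in the kernel of $\delta^{2}\nu$ by the diffeomorphism invariance of $\nu$. Within $\mathrm{Ker}(\mathrm{div}) = K_{0}\oplus\mathbb{R}g_{E}$ the pure-trace line $\mathbb{R}g_{E}$ is neutralised by the scale invariance of $\nu$ together with the optimal choice of the auxiliary data $(f,\tau)$. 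Hence only the restriction $\delta^{2}\nu|_{K_{0}}$ carries information, which is exactly why the criterion is phrased in terms of $\Delta_{L}\colon K_{0}\to K_{0}$.

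The substantive step is to evaluate $\delta^{2}\nu$ on $K_{0}$. Here I would linearise the coupled system consisting of the linearised Ricci tensor together with the Euler--Lagrange equations defining the optimal $f$ and $\tau$ in the $\mathcal{W}$ functional; because the minimisers depend on the metric, the induced variations $\delta f$ and $\delta\tau$ appear and must themselves be solved for in terms of $h$. This implicit dependence is the genuinely delicate bookkeeping, and is the reason the formula for $\delta^{2}\nu$ was only established carefully afterwards. After substituting the constraint relations and using $\Ric(g_{E}) = \Lambda g_{E}$ to rewrite the curvature contributions, the cross terms involving $\delta f$ and $\delta\tau$ cancel on $K_{0}$, and the Hessian collapses to $\delta^{2}\nu(h,h) = c\int_{M}\langle(\Delta_{L}+2\Lambda)h,\,h\rangle\,dV_{g_{E}}$ for a fixed positive constant $c$ depending only on $\vol(g_{E})$ and $\Lambda$.

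The dichotomy is then immediate from the spectral theorem. With the convention that $\Delta_{L}$ is bounded above with only finitely many positive eigenvalues, the operator $\Delta_{L}+2\Lambda$ is negative definite on $K_{0}$ exactly when the largest eigenvalue of $\Delta_{L}|_{K_{0}}$ is $<-2\Lambda$, in which case $\delta^{2}\nu<0$ and $g_{E}$ is linearly stable; conversely, if some eigentensor $h\in K_{0}$ has $\Delta_{L}h>-2\Lambda h$, then $\delta^{2}\nu(h,h)>0$ and $g_{E}$ is linearly unstable. The main obstacle is confined entirely to the third step: controlling the metric-dependence of the minimisers $(f,\tau)$ and verifying that the associated cross terms drop out. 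Once that computation is in hand, the spectral statement follows formally, and I would simply cite \cite{CHI} (with the detailed variational formula from the later treatment) rather than reproduce it in full.
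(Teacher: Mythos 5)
The paper itself gives no proof of this proposition: it is lifted verbatim from \cite{CHI} and is explicitly offered as something that ``could be taken to be the definition of linear stability,'' so the only ``proof'' in the paper is the citation. Your sketch is a correct outline of precisely the argument that citation points to --- $\delta^{2}\nu$ vanishes on $\mathrm{Im}(\mathrm{div}^{\ast})$ and on $\mathbb{R}g_{E}$ by diffeomorphism and scale invariance, and on $K_{0}$ it reduces to a positive multiple of $\int_{M}\langle(\Delta_{L}+2\Lambda)h,h\rangle\,dV_{g_{E}}$ (your ``cross terms cancel'' step is, in \cite{CHI} and \cite{CZ}, the vanishing of the auxiliary Hessian term $\nabla^{2}v_{h}$ on $\mathrm{Ker}(\mathrm{div})$, which follows from the Lichnerowicz--Obata eigenvalue bound) --- after which the spectral dichotomy is immediate, so your proposal matches the source the paper relies on.
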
 
In considering the case when the largest eigenvalue of $\Delta_{L}$ on $K_{0}$ is equal to $-2\Lambda$, we make the following definition.
\begin{definition}[(Infinitesimal solitonic deformation (cf. Definition 6.1 in \cite{KKCVP}))] \label{Def2}
If the largest eigenvalue of $\Delta_{L}$ on $K_{0}$ is exactly $-2\Lambda$ then we say that the Einstein metric is \textit{neutrally linearly stable}. We will refer to the corresponding eigentensors as \textit{infinitesimal solitonic deformations} of the metric $g_{E}$.
\end{definition}
Cao and He, building on the work in \cite{CHI}, investigated deformations coming from conformal variations of the Einstein metric.  In \cite{CH} they proved that, apart from when the manifold $(M,g_{E})$ is the round sphere, there is a linear map $\mathcal{S}:C^{\infty}_{0}(M)\rightarrow K_{0}$, where
$$
C^{\infty}_{0}(M) = \left\{f\in C^{\infty}(M) \ | \ \int_{M}f dV_{g}=0\right\},
$$
such that $\Delta_{L}(S(u)) = S(\Delta u)$ for all $u\in C^{\infty}_{0}(M)$. The map sends eigenvalues of the ordinary Laplacian to eigentensors of  the Lichnerowicz Laplacian and clearly preserves the eigenvalue; in particular, $\Delta_{L}\geq \Delta$. Cao and He gave a comprehensive study of the linear stability of compact symmetric spaces and showed that $G_{2}$ is neutrally linearly stable with the infinitesimal solitonic deformations given by $\mathcal{S}(\varphi)$, where $\varphi$ is an eigenfunction for the first non-zero eigenvalue of the Laplacian.\vspace{-1pt}\\
\\
The relationship between Cao--Hamilton--Ilmanen's linear stability and dynamical stability for Einstein metrics with $\Lambda>0$  was made precise by Kr\"oncke \cite{KKCVP} who built on earlier work by Sesum \cite{Sesum} and Haslhofer and M\"uller \cite{HasMul}. 
\begin{prop}[(Kr\"oncke (cf. Theorem 6.5 and Corollary 6.7 in \cite{KKCVP}))] \label{prop:2}
	Let $(M,g_{E})$ be a compact Einstein manifold satisfying $Ric(g_{E}) = \Lambda g_{E}$ with $\Lambda>0$.  Then 
	\begin{enumerate}
		\item $g_{E}$ is dynamically stable if it is linearly stable,
		\item $g_{E}$ is dynamically unstable if it is linearly unstable.	
	\end{enumerate}
\end{prop}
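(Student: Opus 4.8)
The plan is to use Perelman's $\nu$-entropy as a Lyapunov functional and to transfer the spectral information about $\Delta_L$ on $K_0$ into dynamical conclusions via a \L{}ojasiewicz--Simon gradient inequality. Recall that $\nu$ is monotone non-decreasing along the (volume-normalised) Ricci flow, is constant precisely on shrinking gradient solitons, and has every positive Einstein metric as a critical point. The first step is to invoke the Cao--Hamilton--Ilmanen computation: the second variation of $\nu$ at $g_E$, restricted to $K_0$, is, up to a positive constant, the quadratic form associated to $\Delta_L + 2\Lambda$. Consequently the linear stability hypothesis $\Delta_L < -2\Lambda$ is exactly the statement that the Hessian of $\nu$ at $g_E$ is negative definite on $K_0$, so that $g_E$ is a strict local maximiser of $\nu$; linear instability $\Delta_L > -2\Lambda$ instead produces a direction $h\in K_0$ along which $\nu$ increases to second order.

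For part (1), I would combine this local maximality with a \L{}ojasiewicz--Simon inequality for $\nu$ near $g_E$. Since $\nu$ is (real-)analytic as a function of the metric, one obtains an estimate of the form $|\nu(g)-\nu(g_E)|^{1-\theta}\leq C\,\|\mathrm{grad}\,\nu(g)\|$ for some $\theta\in(0,\tfrac12]$ and all $g$ in a small neighbourhood. Starting the flow at $g_0\in V$, monotonicity together with local maximality gives $\nu(g(t))\leq\nu(g_E)$, and the \L{}ojasiewicz--Simon inequality forces the total variation $\int_0^\infty\|\partial_t g\|\,dt$ to be finite; this confines the flow to $U$ for all time and yields convergence modulo diffeomorphism to a critical point, which must be an Einstein metric in $U$. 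This is the convergence scheme developed by Sesum, Haslhofer--M\"uller and Kr\"oncke, and it establishes dynamical stability.

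For part (2), linear instability supplies an $h\in K_0$ with positive second variation, so I would take $g_0 = g_E+\varepsilon h$ with $\varepsilon$ small enough to arrange $\nu(g_0)>\nu(g_E)$. Along the flow, monotonicity gives $\nu(g(t))\geq\nu(g_0)>\nu(g_E)$. If the flow converged, modulo diffeomorphism and scaling, to an Einstein metric $g_\infty\in U$, then $\nu(g_\infty)=\lim_{t\to\infty}\nu(g(t))>\nu(g_E)$; but any Einstein metric sufficiently $C^m$-close to $g_E$ is, after a diffeomorphism and rescaling, $g_E$ itself, and hence has $\nu$-value $\nu(g_E)$. This contradiction shows that no flow issuing from such a $g_0$ can return to $g_E$, which is exactly dynamical instability.

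The main obstacle---and the technical heart of Kr\"oncke's argument, which in a note of this kind one simply cites---is the rigorous \L{}ojasiewicz--Simon inequality in this infinite-dimensional, diffeomorphism-invariant setting. Two features demand care. First, $\nu$ is itself defined through an inner minimisation over the scale parameter $\tau$ and the weight $f$, so establishing its analyticity and computing its gradient is delicate. Second, the Ricci flow is only a gradient flow after quotienting by the diffeomorphism group and fixing the scale, so the gauge directions and the trivial direction $\mathbb{R}g\subset\mathrm{Ker}(\mathrm{div})$ must be factored out consistently for ``convergence modulo diffeomorphism'' to be well posed. It is in handling these gauge and analyticity issues, rather than the linear-algebraic sign computation, that the real work lies.
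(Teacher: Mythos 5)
The paper does not actually prove this proposition; it is quoted from Kr\"oncke \cite{KKCVP}, so your sketch has to be measured against the proof in that reference. Your overall scheme --- the Cao--Hamilton--Ilmanen identification of the $\nu$-Hessian on $K_{0}$ with a positive multiple of $\Delta_{L}+2\Lambda$, monotonicity of $\nu$ along the flow, and a \L{}ojasiewicz--Simon inequality in the style of Sesum \cite{Sesum}, Haslhofer--M\"uller \cite{HasMul} and Kr\"oncke --- is exactly the strategy of the cited proof, and your part (1) is a fair summary of it, including the acknowledgement that the gauge directions $\mathrm{Im}(\mathrm{div}^{\ast})\oplus\mathbb{R}g$ must be factored out by a slice argument before ``local maximiser'' makes sense.

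Part (2), however, contains a genuine gap: you assert that ``any Einstein metric sufficiently $C^{m}$-close to $g_{E}$ is, after a diffeomorphism and rescaling, $g_{E}$ itself.'' That local rigidity statement is false for general Einstein metrics: positive Einstein metrics can occur in nontrivial continuous families (for instance K\"ahler--Einstein metrics on del Pezzo surfaces moving with the complex structure), and nothing in the hypotheses of the proposition excludes this. Since Definition \ref{Def 1} only demands convergence to \emph{some} Einstein metric in $U$, refuting stability requires ruling out convergence to every Einstein metric near $g_{E}$, not merely to isometric rescalings of $g_{E}$. The repair is available from the tool you already invoked: if $g'$ is any critical point of $\nu$ in the neighbourhood where the \L{}ojasiewicz--Simon inequality $\abs{\nu(g)-\nu(g_{E})}^{1-\theta}\leq C\norm{\mathrm{grad}\,\nu(g)}$ holds, then the right-hand side vanishes at $g'$, forcing $\nu(g')=\nu(g_{E})$. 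Hence every Einstein metric near $g_{E}$ has the same entropy value as $g_{E}$, and your monotonicity argument $\lim_{t\to\infty}\nu(g(t))\geq\nu(g_{0})>\nu(g_{E})$ then genuinely contradicts convergence (modulo diffeomorphism, under which $\nu$ is invariant) to any of them. Note also that Kr\"oncke's instability conclusion is stronger than the negation of Definition \ref{Def 1}: he constructs a nontrivial ancient rescaled Ricci flow converging to $g_{E}$ as $t\to-\infty$, and that construction again leans on the \L{}ojasiewicz--Simon inequality --- so the inequality is needed in part (2) as well, not only in part (1) as your write-up suggests.
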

\begin{remark}
In \cite{KKCVP} the results are stated with a further condition that $g_{E}$ has to satisfy, namely that the infinitesimal solitonic deformations must be integrable (i.e. the deformation comes from a genuine curve of Ricci solitons). As our notion of linear stability from Proposition \ref{prop:1} does not include the possibility that the Lichnerowicz Laplacian restricted to $K_{0}$ has an eigenvalue equal to $-2\Lambda$, $g_{E}$ has no such deformations and there is nothing to check so we omit this condition from the statement of Proposition \ref{prop:2}. The fact that linear instability implies dynamical instability does not require any integrability assumption on  
the infinitesimal solitonic deformations. In the case that the Einstein metric is neutrally linearly stable, one can conclude that the metric is dynamically stable if one can show that all infinitesimal solitonic deformations are integrable.
\end{remark}
For Einstein metrics with infinitesimal solitonic deformations corresponding to $\mathcal{S}(\varphi)$ where $\varphi$ an eigenfunction of the Laplacian, Kr\"oncke proved the following result giving a criterion for demonstrating dynamical instability.
\begin{theorem}[(Kr\"oncke (cf. Theorem 1.7 in \cite{KKCAG}))]\label{prop:3}
	Let $(M,g_{E})$ be an Einstein metric satisfying $Ric(g_{E}) = \Lambda g_{E}$ for $\Lambda>0$ and  suppose that  $\varphi$ satsfies $\Delta \varphi = -2\Lambda \varphi$. If 
	$$\int_{M}\varphi^{3} dV_{g_{E}} \neq 0,$$
	then $g_{E}$ is dynamically unstable.	
\end{theorem}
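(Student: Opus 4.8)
The hypothesis singles out the infinitesimal solitonic deformation $h := \mathcal{S}(\varphi)$ supplied by the Cao--He map: since $\Delta\varphi = -2\Lambda\varphi$ we have $\Delta_L h = -2\Lambda h$, so $h$ sits exactly on the borderline $\Delta_L = -2\Lambda$ of Proposition \ref{prop:1}. In this neutral case the second variation of Perelman's $\nu$-functional vanishes along $h$ and Proposition \ref{prop:2} is silent, so the plan is to decide stability at the next order. The guiding principle, which underlies Proposition \ref{prop:2}, is that $\nu$ increases monotonically along the (volume-normalised) Ricci flow and that $g_E$ is dynamically stable if and only if it is a local maximiser of $\nu$ modulo diffeomorphism and scaling; it therefore suffices to exhibit metrics arbitrarily close to $g_E$ with strictly larger $\nu$-value.

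First I would reduce to a finite-dimensional problem by a Lyapunov--Schmidt argument. Writing a small perturbation as $k = k_0 + k_\perp$, with $k_0$ in the finite-dimensional kernel $N$ of the Hessian of $\nu$ (the space of infinitesimal solitonic deformations) and $k_\perp$ in the $L^2$-orthogonal complement on which the Hessian is strictly definite, the implicit function theorem lets me solve for the transverse part as $k_\perp = \Phi(k_0)$ with $\Phi(0)=0$ and $D\Phi(0)=0$. This produces a smooth reduced functional $\tilde\nu(k_0) := \nu(g_E + k_0 + \Phi(k_0))$ whose extremal behaviour at the origin matches that of $\nu$ at $g_E$. Because the Hessian vanishes on $N$ and $\Phi$ is quadratically small, the first non-trivial Taylor coefficient of $\tilde\nu$ is the cubic one and it coincides with the third variation of $\nu$ restricted to $N$:
$$\tilde\nu(t\,h) = \nu(g_E) + \tfrac{1}{6}\,t^{3}\,\nu'''(h,h,h) + O(t^{4}).$$

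Next I would compute $\nu'''(h,h,h)$ explicitly. Substituting $h = \mathcal{S}(\varphi)$ into the formula for the third variation of $\nu$ and integrating by parts repeatedly, I would use the Einstein equation $Ric(g_E) = \Lambda g_E$ together with the eigenvalue equation $\Delta\varphi = -2\Lambda\varphi$ to eliminate every derivative of $\varphi$. The expectation is that all of the resulting curvature integrals either cancel or collapse to a single universal term, giving
$$\nu'''(h,h,h) = c\int_M \varphi^{3}\,dV_{g_E}, \qquad c \neq 0.$$
If $\int_M\varphi^{3}\,dV_{g_E}\neq 0$ the cubic coefficient of $\tilde\nu$ is non-zero, so along the line $t\,h$ the functional $\tilde\nu$ takes values strictly above $\nu(g_E)$ on one side of the origin; equivalently, the solitonic deformation $h$ is non-integrable. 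Hence $g_E$ is not a local maximiser of $\nu$, and the monotonicity of $\nu$ --- made quantitative by the {\L}ojasiewicz--Simon inequality of \cite{KKCVP} --- drives nearby flows away from $g_E$, yielding dynamical instability in the sense of Definition \ref{Def 1}.

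The hard part is the third-variation computation: one must write down $\nu'''$ for the conformal-type eigentensor $\mathcal{S}(\varphi)$ and then verify that, after the integrations by parts, nothing survives except a non-zero multiple of $\int_M\varphi^{3}\,dV_{g_E}$ --- in particular that the coefficient $c$ does not vanish. A secondary point requiring care is the Lyapunov--Schmidt step, namely checking that the transverse corrections $\Phi(k_0)$ contribute only at fourth order, so that the cubic term of the reduced functional really is $\nu'''$ restricted to $N$.
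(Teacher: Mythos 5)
First, a point of order: the paper you are working from does \emph{not} prove this statement. Theorem \ref{prop:3} is quoted as a black box from Kr\"oncke \cite{KKCAG} (Theorem 1.7 there), and the paper's own contribution is only to verify its hypothesis for $G_{2}$ via the Weyl integration formula. So the comparison must be with Kr\"oncke's published proof, not with anything in this paper. Measured against that, your high-level strategy is essentially the right one and is the one Kr\"oncke follows: identify the neutral direction $h=\mathcal{S}(\varphi)$ with $\Delta_{L}h=-2\Lambda h$, invoke the theorem that an Einstein metric which is not a local maximiser of $\nu$ is dynamically unstable (this is the {\L}ojasiewicz--Simon-based machinery of \cite{KKCVP}, and it is needed as a cited theorem --- with Definition \ref{Def 1} one must also rule out convergence to a \emph{different} nearby Einstein metric of larger $\nu$-value, which bare monotonicity of $\nu$ does not do), and then show that the third-order term of $\nu$ in the direction $h$ is a non-zero universal multiple of $\int_{M}\varphi^{3}\,dV_{g_{E}}$.

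The genuine gap is that you defer exactly the step that constitutes the mathematical content of the theorem. Your sentence ``the expectation is that all of the resulting curvature integrals either cancel or collapse to a single universal term'' with $c\neq 0$ is an assertion, not an argument; if $c$ vanished, the theorem would be false as stated, so nothing short of the explicit computation settles it. Moreover, the computation is not a matter of ``substituting $h=\mathcal{S}(\varphi)$ into the formula for the third variation of $\nu$'': no such formula exists off the shelf. The second variation is known (\cite{CHI}, proved in \cite{CZ}), but $\nu(g)$ is defined by a minimisation over the pair $(f,\tau)$ in Perelman's $\mathcal{W}$-functional, so differentiating a third time requires the first-order variation of the minimising pair along the curve $g_{E}+th$, i.e.\ solving a linearised elliptic equation and feeding the solution back into the expansion. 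Carrying this out for the conformal-type direction, and extracting the coefficient $c\neq 0$, is precisely the technical core of Kr\"oncke's proof (and of the independent proof by Knopf--Sesum \cite{KS}). The Lyapunov--Schmidt framing you describe is reasonable and standard, but with the cubic coefficient left unverified, what you have is a correct strategy outline rather than a proof.
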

\begin{remark}
A necessary condition for the infinitesimal solitonic deformation $\mathcal{S}(\varphi)$ to be integrable is that the integral in Theorem \ref{prop:3} vanishes. Hence, in the course of the proof of Theorem \ref{mainT}, we show that $G_{2}$ has non-integrable infinitesimal solitonic deformations. This can be compared with Theorem 1.5 in \cite{KKCVP} where it is shown that the Fubini--Study metric on  $\mathbb{CP}^{n}$ has non-integrable solitonic deformations for $n\geq 2$. 	
\end{remark}	
\subsection{The structure and geometry of the Lie group $G_{2}$}
We begin by recalling some facts about general compact, connected, simple Lie groups $G$ with the metric induced by the Killing form. It has long been known (e.g. \cite{Sug} and \cite{Ura}) that the eigenvalues of the Laplacian can be calculated using Freudenthal's formula. Corresponding to an irreducible representation with highest weight $\lambda$, $V_{\lambda}$, there is an eigenvalue of the Laplacian, $\mu_{\lambda}$, given by
\begin{equation}\label{Fform}
	\mu_{\lambda} =|\rho|^{2}- | \lambda+\rho|^{2},
\end{equation}
where $\rho=\frac{1}{2}\sum_{\alpha \in R^{+}}\alpha$ and $R^{+}$ is the set of positive roots. Furthermore, if $V_{\lambda}$ is the complexification of an irreducible real representation, the character, $\chi_{\lambda}$,  associated to $V_{\lambda}$ is real and is an eigenfunction of the Laplacian with eigenvalue $\mu_{\lambda}$ \cite{Ber}.\\
\\
In this note, $G_{2}$ denotes the $14$-dimensional compact real form of the Lie group with complex simple Lie algebra $\mathfrak{g}_{2}$. The rank of $G_{2}$ is 2 and we let $T$ be a maximal torus with Lie algebra $\mathfrak{t}$. We will use the description of the root system $R\subset\mathfrak{t}^{\ast} \cong \mathbb{R}^{2}$ given in \cite{FH}; there is a short simple root $\alpha_{1} = (1,0)$ and long root $\alpha_{2} = (-3/2,\sqrt{3}/2)$.  The positive roots are then
$$R^{+} = \{\alpha_{1}, 3\alpha_{1}+\alpha_{2}, 2\alpha_{1} + \alpha_{2}, 3\alpha_{1}+2\alpha_{2}, \alpha_{1}+\alpha_{2}, \alpha_{2} \}.$$
It is important to note that in \cite{FH} the authors scale the Killing form so that, with respect to the metric induced upon the dual space, $|\alpha_{1}|^{2}=1$.  On $G_{2}$, this normalisation corresponds to scaling the metric $g\rightarrow \frac{1}{12}g$; in this case, the Einstein constant of the rescaled metric is $3$. The weight lattice is generated by the simple roots $\alpha_{1}$  and $\alpha_{2}$.  The intersection of the closed Weyl chamber and the weight lattice is generated by the fundamental weights $\omega_{1}$, $\omega_{2}$ where
$$\omega_{1} = 2\alpha_{1}+\alpha_{2} \textrm{ and } \omega_{2} = 3\alpha_{1} + 2\alpha_{2}.$$
The irreducible representation with highest weight $\lambda = a\omega_{1}+b\omega_{2}$ is denoted $\Gamma_{a,b}$ where ${a,b\in \mathbb{Z}_{+}}$.
\begin{prop}\label{G2evalvec}
	Let $g$ be the bi-invariant metric on $G_{2}$ induced by the Killing form. Then:
	\begin{enumerate}
		\item The largest eigenvalue of the Laplacian is $-1/2$ and this corresponds to the irreducible representation $\Gamma_{1,0}$.
		\item The character $\chi_{1,0}$ is a corresponding eigenfunction and can be represented in coordinates $\theta_{1},\theta_{2}$ on a maximal torus $T$ by the function
		$$\chi_{1,0}(\theta_{1},\theta_{2}) = 2\cos(\theta_{1})+2\cos(\theta_{2})+2\cos(\theta_{1}+\theta_{2})+1. $$
	\end{enumerate}
\end{prop}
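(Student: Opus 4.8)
The plan is to prove the two assertions separately, using Freudenthal's formula \eqref{Fform} for (1) and the weight structure of the standard representation for (2). For part (1), I would first compute $\rho = \frac{1}{2}\sum_{\alpha\in R^+}\alpha$; summing the six positive roots gives $10\alpha_1 + 6\alpha_2$, so $\rho = 5\alpha_1 + 3\alpha_2 = \omega_1 + \omega_2$, which in the coordinates of \cite{FH} is $\rho = (\tfrac12, \tfrac{3\sqrt3}{2})$ and hence $|\rho|^2 = 7$. By \eqref{Fform} the eigenvalue attached to $\Gamma_{a,b}$ is $\mu = 7 - |\lambda+\rho|^2$ with $\lambda = a\omega_1 + b\omega_2$, so the largest nonzero eigenvalue is realised by the nonzero dominant weight minimising $|\lambda+\rho|^2$. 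Expanding $|\lambda+\rho|^2 = |\rho|^2 + 2\langle\lambda,\rho\rangle + |\lambda|^2$ and noting that $\rho$ is regular dominant (so $\langle\omega_i,\rho\rangle > 0$, and also $\langle\omega_1,\omega_2\rangle > 0$) shows that both correction terms are strictly positive and strictly increasing in each of $a$ and $b$; the minimiser over nonzero dominant weights is therefore one of the two fundamental weights.

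A direct computation gives $|\omega_1+\rho|^2 = 13$ and $|\omega_2+\rho|^2 = 19$, so the minimum is attained uniquely at $\lambda = \omega_1$, i.e. by $\Gamma_{1,0}$, with Freudenthal eigenvalue $7-13 = -6$ (the next candidate $\Gamma_{0,1}$ gives the smaller value $-12$). This establishes that $\Gamma_{1,0}$ supplies the largest nonzero eigenvalue in the normalisation of \cite{FH}.

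The one genuinely delicate point is the normalisation. Freudenthal's formula is computed in the scaling of \cite{FH}, where $|\alpha_1|^2 = 1$, and this corresponds to the metric $\frac{1}{12}g$ rather than to $g$ itself. Since the Laplace--Beltrami eigenvalues scale inversely with the metric, the value $-6$ in the \cite{FH} normalisation becomes $-6/12 = -\tfrac12$ for the metric $g$ induced by the Killing form; this is consistent with the Einstein constant $\Lambda = \tfrac14$ of $g$, for which the relevant solitonic eigenvalue $-2\Lambda$ is precisely $-\tfrac12$. I would carry this bookkeeping through explicitly, since it is the easiest step to get wrong.

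For part (2), the key input is that $\Gamma_{1,0}$ is the standard $7$-dimensional representation of $G_2$ (the action on the imaginary octonions), which is real; by the fact quoted from \cite{Ber} its character is therefore real and is an eigenfunction for the eigenvalue found in (1). Its weights are the zero weight, with multiplicity one, together with the six short roots $\pm\alpha_1, \pm(\alpha_1+\alpha_2), \pm(2\alpha_1+\alpha_2)$, so restricting the character to $T = \exp(\mathfrak{t})$ gives $\chi_{1,0}(H) = 1 + \sum_{\beta} e^{i\beta(H)}$ with $\beta$ running over the short roots; pairing up $\pm\beta$ turns this into $1 + 2\cos(\alpha_1(H)) + 2\cos((\alpha_1+\alpha_2)(H)) + 2\cos((2\alpha_1+\alpha_2)(H))$. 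Finally I would choose the torus coordinates $\theta_1 = \alpha_1(H)$ and $\theta_2 = (\alpha_1+\alpha_2)(H)$; since $2\alpha_1+\alpha_2 = \alpha_1 + (\alpha_1+\alpha_2)$, the third argument becomes $\theta_1 + \theta_2$ and the stated formula follows. The main obstacle here is pinning down the weight multiplicities of $\Gamma_{1,0}$ (in particular that the zero weight occurs exactly once), which I would justify either by identifying the representation explicitly or via the Weyl/Freudenthal multiplicity formula.
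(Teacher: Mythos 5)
Your proof is correct, and it differs from the paper's in instructive ways. For part (1) the paper likewise applies Freudenthal's formula \eqref{Fform} and the rescaling $k=12$, but it identifies $\Gamma_{1,0}$ only as the smallest-dimensional non-trivial representation; your expansion $|\lambda+\rho|^{2}=|\rho|^{2}+2\langle\lambda,\rho\rangle+|\lambda|^{2}$, with both correction terms strictly increasing in $a$ and $b$ (using $\langle\omega_{1},\omega_{2}\rangle>0$ and $\langle\omega_{i},\rho\rangle>0$), genuinely proves that $\omega_{1}$ minimises $|\lambda+\rho|^{2}$ over non-zero dominant weights --- a point the paper leaves implicit, since minimal dimension does not formally imply largest eigenvalue. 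Your numbers ($|\rho|^{2}=7$, $|\omega_{1}+\rho|^{2}=13$, $|\omega_{2}+\rho|^{2}=19$) and the scaling bookkeeping agree with the paper's $\mu_{1,0}=-6$ and the final value $-1/2$. For part (2) you take a genuinely different route: the paper substitutes $a=1$, $b=0$ into the Schur-polynomial character formula \eqref{WCF} from Proposition 24.48 of \cite{FH} and simplifies, whereas you read the character off the weight decomposition of the standard $7$-dimensional representation (the zero weight plus the six short roots) and then choose coordinates $\theta_{1}=\alpha_{1}(H)$, $\theta_{2}=(\alpha_{1}+\alpha_{2})(H)$. Your route is more elementary and self-contained, at the cost of the two facts you rightly flag: that the zero weight has multiplicity one (immediate from $\dim\Gamma_{1,0}=7$ together with the transitivity of the Weyl group on the six short roots), and that your $\theta_{1},\theta_{2}$ are honest $2\pi$-periodic coordinates on $T$ (true because $G_{2}$ has trivial centre and trivial fundamental group, so the character lattice of $T$ is exactly the root lattice, of which $\{\alpha_{1},\alpha_{1}+\alpha_{2}\}$ is a basis). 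Neither point is a gap, but they are where the burden of proof sits in your version. The paper's Schur-polynomial coordinates carry the incidental advantage that they match the torus parametrisation used again to compute $\delta$ in the proof of the main theorem; both proofs invoke \cite{Ber} and the realness of $\Gamma_{1,0}$ in the same way.
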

\begin{proof}
	The smallest-dimensional non-trivial representation of $G_{2}$ has dimension 7 and corresponds to $\Gamma_{1,0}$ (i.e. it is associated to the smallest of the fundamental weights). The complex irreducible representation is a complexification of an underlying irreducible real representation (this representation comes from a description of $G_{2}$ as a certain subgroup of $SO(7)$ and so one can restrict the natural representation). \\
	\\
	Noting that $\rho = \omega_{1} + \omega_{2}$ we compute using Equation (\ref{Fform})
	$$\mu_{1,0} =   |\omega_{1}+\omega_{2}|^{2}-|2\omega_{1}+\omega_{2}|^{2}  = -6.$$
	If we scale any Riemannian metric by ${g\rightarrow kg}$ for some $k>0$, then the Einstein constant scales by ${\Lambda \rightarrow k^{-1}\Lambda}$ and eigenvalues of the Laplacian also scale by ${\mu\rightarrow k^{-1}\mu}$.  
	As mentioned, we need to rescale by taking $k=12$ and so the eigenvalue for the metric induced by the Killing form is $-1/2$ and we have proved part $(1)$.\\
	\\
	To compute the character we note the following formula computed in Proposition 24.48 of \cite{FH}
	\begin{equation}\label{WCF}
		\chi_{a,b} = \frac{S_{(a+2b+1,a+b+1)}-S_{(a+2b+1,b)}}{S_{(1,1)}-S_{(1)}},
	\end{equation}
	where $S_{(s,t)}(x_{1},x_{2},x_{3})$ is the Schur polynomial corresponding to the partition $(s,t)$ and the variables $x_{1},x_{2}$ and $x_{3}$ satisfy $\prod x_{i}=1$.  Here the character is defined on the actual torus $T$ and so $x_{1} = e^{i\theta_{1}}$ and $x_{2}=e^{i\theta_{2}}$.  Part 2 follows from substitution of $a=1$ and $b=0$ into Equation (\ref{WCF}) and simplifying. 
\end{proof}

\section{The proof of Theorem \ref{mainT} }
In order to compute the integral in Kr\"oncke's stability criterion, we need the Weyl integration formula.  Here we give the version from \cite{Bump}; to state it, we recall that a maximal torus $T$ with Lie algebra $\mathfrak{t}$ yields an $Ad$-invariant decomposition ${\mathfrak{g} = \mathfrak{t}\oplus \mathfrak{p}}$.   
\begin{prop}[(Weyl integration formula, Theorem 17.2 in \cite{Bump})]
	Let $G$ be a compact connected Lie group, $T$ a maximal torus, and $\mathfrak{p}$ as previously. If $f$ is a class function, and if $dg$ and $dt$ are Haar measures on $G$ and $T$ (normalized so that $G$ and $T$ have volume $1$), then
	\begin{equation}\label{WeylIF}
	\int_{G}f(g) \mathrm{d}g = \frac{1}{|W|}\int_{T}f(t)\det\left([Ad(t^{-1})-Id]|_{\mathfrak{p}} \right) \mathrm{d}t,
	\end{equation}
	where $W$ is the Weyl group.
\end{prop}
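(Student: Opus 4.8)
The plan is to realise the group $G$, up to a set of measure zero, as the image of the conjugation map from $(G/T)\times T$ and to push the Haar measure forward through it, so that the Jacobian of this map produces exactly the Weyl determinant. Because $f$ is a class function it will be constant along the conjugacy (i.e. $G/T$) directions, so those integrate out trivially and only the multiplicity factor $1/|W|$ and the determinant survive. First I would introduce the map
$$\psi : (G/T)\times T \longrightarrow G, \qquad \psi(gT,t)=gtg^{-1},$$
which is well defined since $T$ is abelian. By the maximal torus theorem, every element of the compact connected group $G$ is conjugate into $T$, so $\psi$ is surjective; restricting to the regular elements of $T$ (those for which $Ad(t)$ has no eigenvalue $1$ on $\mathfrak{p}$) makes $\psi$ smooth onto an open dense set of full measure.

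Second, I would compute the differential of $\psi$. Trivialising the tangent spaces by left translation and using the $Ad$-invariant splitting $\mathfrak{g}=\mathfrak{t}\oplus\mathfrak{p}$, the tangent map at $(gT,t)$ is conjugate, by the isometry $Ad(g)$ (whose Jacobian is $1$), to the endomorphism of $\mathfrak{p}\oplus\mathfrak{t}$ sending $(Y,H)\mapsto \big((Ad(t^{-1})-Id)Y,\,H\big)$. This is block-triangular with identity on $\mathfrak{t}$, so the Jacobian of $\psi$ is precisely $\det\left([Ad(t^{-1})-Id]|_{\mathfrak{p}}\right)$. Since the roots organise the eigenvalues of $Ad(t)$ on $\mathfrak{p}$ into complex-conjugate pairs $e^{\pm i\theta_{\alpha}}$ indexed by $\alpha\in R^{+}$, this determinant equals $\prod_{\alpha\in R^{+}}\lvert e^{i\theta_{\alpha}}-1\rvert^{2}\geq 0$, so no absolute value is required in the statement.

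Third, I would count the generic number of preimages. For $n$ in the normaliser $N(T)$ the pairs $(gnT,\,n^{-1}tn)$ all map to the same point under $\psi$, and for regular $t$ these are exactly the $|W|=|N(T)/T|$ distinct preimages. Applying the area (change of variables) formula to $\psi$, and using that $f(\psi(gT,t))=f(gtg^{-1})=f(t)$ because $f$ is a class function, the integrand becomes independent of the $G/T$ variable. Under the stated normalisation the $G/T$-integral contributes $\vol(G/T)=\vol(G)/\vol(T)=1$, leaving
$$\int_{G}f(g)\,\mathrm{d}g=\frac{1}{|W|}\int_{T}f(t)\det\left([Ad(t^{-1})-Id]|_{\mathfrak{p}}\right)\mathrm{d}t,$$
as claimed.

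The hard part will be the Jacobian computation together with the bookkeeping of multiplicity and measure normalisation: one must verify that the singular set on which $\psi$ drops rank is a null set and can be discarded, that $\psi$ is genuinely $|W|$-to-one over the regular set, and that the chosen Haar and quotient measures are mutually compatible so that the volume factors collapse to $1$. As this is a classical result, in the paper one would simply invoke \cite{Bump}.
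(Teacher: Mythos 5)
Your proposal is correct, and it is essentially the classical proof of the Weyl integration formula as given in the cited reference \cite{Bump} (Theorem 17.2): the paper itself offers no proof of this proposition, simply invoking Bump, and your conjugation map $\psi$, the block-diagonal Jacobian computation yielding $\det\left([Ad(t^{-1})-Id]|_{\mathfrak{p}}\right)$, the $|W|$-to-one multiplicity count over regular elements, and the measure normalisation are exactly the ingredients of that standard argument. Nothing further is needed beyond the verifications you already flag (null set of singular elements, compatibility of the quotient measure), all of which go through in the standard way.
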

\begin{remark}
	For a compact connected Lie group $G$, the Haar measure is, up to scale, induced by the volume form of the bi-invariant metric.  As the proof of the Theorem \ref{mainT} will simply require a certain integral is non-zero, we need not worry carefully about the rescaling of the measures to ensure unit volumes. 
\end{remark}
We also note a computational simplification of the Formula (\ref{WeylIF}) by considering the quantity
$$\delta(\theta) = \prod_{\alpha\in R^{+}}(e^{\alpha(\theta)/2}-e^{-\alpha(\theta)/2}),$$
where $\theta$ is shorthand for coordinates on the maximal torus $T$. We have the following identity
$$ 	\det\left([Ad(\exp(-\theta))-Id]|_{\mathfrak{p}}\right) = \delta\bar{\delta}.  $$
We note further that $\delta$ is also the denominator (sometimes denoted $A_{\rho}$) in the Weyl character formula.  We now give the proof of the main result.
\begin{proof}[Proof of Theorem~{\rm\ref{mainT}}] 
	In order to prove instability, we must compute the integral in Theorem \ref{prop:3} for a $-2\Lambda-$eigenfunction of the Laplacian $\varphi$ and show the integral does not vanish.  Proposition \ref{G2evalvec} yields that the character of the irreducible 7-dimensional representation of $G_{2}$, 
	$$\chi_{1,0} = 2\cos(\theta_{1})+2\cos(\theta_{2})+2\cos(\theta_{1}+\theta_{2})+1.$$ 
	is such a $-2\Lambda$-eigenfunction. Clearly any power of the character is a class function and so the integral can be computed using the Weyl integration formula (\ref{WeylIF}). As we are only interested in whether or not the integral is zero, we do not worry about scaling the Haar measure (induced from the bi-invariant volume form) to have unit mass. The Jacobian factor $\det\left([Ad(t^{-1})-Id]|_{\mathfrak{p}} \right)$ can be computed from the quantity $\delta$ which itself is computable directly from the root data of $\mathfrak{g}_{2}$.  We calculate
	\begin{eqnarray*}
	\delta = 2\cos(\theta_{1}+3\theta_{2})-2\cos(3\theta_{1}+\theta_{2}) +2\cos(2\theta_{1}-\theta_{2})-2\cos(\theta_{1}-2\theta_{2}) & \\
	+2\cos(3\theta_{1}+2\theta_{2})-2\cos(2\theta_{1}+3\theta_{2})&.
	\end{eqnarray*}
	Hence we find
	$$\int_{G_{2}}\varphi^{3}dV_{g}  = C\int_{0}^{2\pi}\int_{0}^{2\pi} \chi_{1,0}^{3} \ \delta^{2} \ d\theta_{1}d\theta_{2},$$
	where $C>0$ is some fixed scaling constant. The result follows from the fact (easily computed in Maple)
	$$\int_{0}^{2\pi}\int_{0}^{2\pi} \chi_{1,0}^{3} \ \delta^{2} \ d\theta_{1}d\theta_{2}  =48\pi^{2} \neq 0.$$
	
\end{proof}

\vspace{5pt}
   Stuart James Hall\\
   School of Mathematics and Statistics, Herschel Building, Newcastle University, Newcastle-upon-Tyne, NE1 7RU\\
   United Kingdom
   \email{stuart.hall@ncl.ac.uk\\
   
}

\end{document}